\newtheorem{Thm}{Theorem}
\newtheorem{Prop}[Thm]{Proposition}
\newtheorem{Lem}{Lemma}
\newtheorem{Rem}{Remark}
\newtheorem*{Prop*}{Proposition}
\newtheorem*{Cor*}{Corollary}
\newtheorem*{Thm*}{Theorem}
\def\xd {\mathrm{d}}
\def\eps{\varepsilon}
\def\R{{\mathbb R}}
\def\S{{\mathbb S}}
\DeclareMathAlphabet{\mathitbf}{OML}{cmm}{b}{it}
\newcommand{\dsp}{\displaystyle}
\newcommand{\dij}{\textrm{\DH}_{ij}}
\newcommand{\dji}{\textrm{\DH}_{ji}}
\begin{document}
%%%%%%%%%%%%%%%%%%%%%%%%%%%%%%%%%%%%%%%%%%%%%%%%%%%%%%%%%%%%%%%%%%%%%
\title[Maxwell-Stefan model in non-isothermal setting]{Maxwell-Stefan diffusion asymptotic for gas mixtures \\ in non-isothermal setting}
\bibliographystyle{plain}

\author[Harsha Hutridurga]{Harsha Hutridurga}
\address{H.H.: Department of Mathematics, Imperial College London, London, SW7 2AZ, United Kingdom.}
\email{h.hutridurga-ramaiah@imperial.ac.uk}

\author[Francesco Salvarani]{Francesco Salvarani}
\address{F.S.: Universit\'e Paris-Dauphine, Ceremade, UMR CNRS 7534, F-75775
Paris Cedex 16, France \& Universit\`a degli Studi di Pavia, Dipartimento di
Matematica, I-27100 Pavia, Italy} 
\email{francesco.salvarani@unipv.it}

\begin{abstract}
A mathematical model is proposed where the classical Maxwell-Stefan diffusion model for gas mixtures is coupled to an advection-type equation for the temperature of the physical system. This coupled system is derived from first principles in the sense that the starting point of our analysis is a system of Boltzmann equations for gaseous mixtures. We perform an asymptotic analysis on the Boltzmann model under diffuse scaling to arrive at the proposed coupled system.
\end{abstract}

\maketitle

%%%%%%%%%%%%%%%%%%%%%%%%%%%%%
\section{Introduction}\label{sec:introduction}
%%%%%%%%%%%%%%%%%%%%%%%%%%%%%

The Maxwell-Stefan theory \cite{max1866, ste1871} has been the most successful approach for describing diffusive phenomena in gaseous mixtures, and it is now the reference model for studying multicomponent diffusion. The Maxwell-Stefan system is a coupled system of cross-diffusion equations and it is commonly used in many scientific fields, for e.g., in engineering \cite{kri-wes-97} and in medical sciences \cite{bou-goe-gre, thi}. 

Despite its current utility, the mathematical studies on the subject are however quite recent (see \cite{gio, ern-gio2, ern-gio, gio_book}). In particular, existence and uniqueness issues, as well as the long-time behaviour, have been considered in \cite{bot-11, bou-gre-sal-12, jun-ste-13, che-jun-15}, whereas \cite{mcl-bou-14} deals with the numerical study of the Maxwell-Stefan equations.

In \cite{bou-gre-sal-15}, the authors provide the formal derivation of the Maxwell-Stefan diffusion equations starting from the non-reactive elastic Boltzmann system for monatomic gaseous mixtures \cite{bourgat1994microreversible, des-mon-sal, bou-gre-pav-sal}. They show that the zeroth and first order moments of appropriate solutions of the Boltzmann system, in the diffusive scaling and for vanishing Mach and Knudsen numbers limit, formally converge to the solution of the Maxwell-Stefan equations. This result, which lies in the research line introduced by Bardos, Golse and Levermore in \cite{bar-gol-lev-89, bar-gol-lev-91, bar-gol-lev-93}, has been obtained in the framework of Maxwellian cross sections.
Subsequently, the approach of \cite{bou-gre-sal-15} has been generalized in \cite{bou-gre-pav-16}, where the Maxwell-Stefan diffusion coefficients have been written in terms of explicit formulas with respect to the cross-sections,
and in \cite{hur-sal-16} where the explicit dependence of the Maxwell-Stefan binary diffusion coefficients with respect to the temperature of the mixture has been obtained for general analytical cross sections satisfying Grad's cutoff assumption \cite{gra-book}. 

All the previous results have been obtained in the isothermal case. However, as pointed out by Krishna and Wesselingh, ``perfectly isothermal systems are rare in chemical engineering practice and many processes such as distillation, absorption, condensation, evaporation and drying involve the simultaneous transfer of mass and energy across phase interfaces'' \cite[p.876]{kri-wes-97}. 

For this reason, it is natural to extend the strategy of \cite{bou-gre-sal-15} to the non-isothermal case, and this is the purpose of the present article: we provide here the asymptotics of the Boltzmann system for monatomic mixtures that leads to a non-isothermal form of the Maxwell-Stefan equations, and thus we can take into account the thermal diffusion contribution to the molar fluxes (thermophoresis).
We postulate that the solution of the Boltzmann system keeps the structure of a local Maxwellian and then we deduce, in the standard diffusive limit, the coupled relationships satisfied by the densities, the fluxes and the temperature at the macroscopic level which guarantee that the local Maxwellian structure is preserved by the time evolution of the system.

A major question is posed by the closure relationship. 
Indeed, as in the case of the Maxwell-Stefan system, the resulting equations for the densities, the fluxes and the temperature are, in the diffusive limit, 
 linearly dependent and an additional equation between the unknowns is necessary in order to close it.
As it is well known, in the isothermal Maxwell-Stefan system, the closure relationship consists in supposing that the sum of all molar fluxes $J_i$ is locally identically zero. This supplementary equation 
%is however not adapted to the non-isothermal setting, because it would result in, as a by product, an uniform and constant temperature of the system. Moreover, it is 
could be incompatible with some experimental behaviours in the non-isothermal setting: as pointed out in \cite{kri-wes-97}, indeed, in chemical vapour deposition (CVD) processes, thermal diffusion causes large, heavy gas molecules (for e.g., WF$_6$) to concentrate in cold regions whereas small, light molecules (such as H$_2$) to concentrate in hot regions. Hence, non-isothermal systems could require new closure relationships which, of course, relax to the isothermal one when the temperature is uniform in time and constant in space.

The closure relation that we suggest is the following: sum of the molar fluxes $J_i$ is locally proportional to the gradient of the total molar concentration, i.e.,
\[
\sum_{i=1}^n J_i = -\alpha \nabla c_{\rm tot}.
\]
With respect to the above mentioned closure relation, we characterize the total molar concentration $c_{\rm tot}$ and the temperature field $T(t,x)$ as solutions to a coupled system of evolution equations. Furthermore, the temperature-dependent flux-gradient relations derived in this paper -- see second line of \eqref{eq:coupled-system-not-closed} -- implies that the product $c_{\rm tot}T$ is space-independent. Hence the above mentioned closure relation postulated in this paper recovers the standard closure relation -- sum of the molar fluxes $J_i$ being locally identically zero -- in the isothermal case.

The outline of the paper is as follows: In subsection \ref{ssec:kinetic}, we introduce the kinetic model -- system of Boltzmann equations for gas mixtures -- and present the assumptions made on the Boltzmann collision kernels (Maxwellian molecules). Subsection \ref{ssec:diffuse-scale} deals with the scaling considered in this work and the main assumption made on the solutions to the scaled mesoscopic kinetic model. In subsection \ref{ssec:balance-laws} we derive the balance laws (mass, momentum and energy) -- see Proposition \ref{prop:balance-laws}. Emphasis is given on computing the coefficients in the balance laws -- given in terms of the velocity averages of certain statistical quantities. A formal asymptotic analysis (in the mean free path going to zero limit) is performed in subsection \ref{ssec:asymptotic} which culminates in Theorem \ref{thm:formal-eps}. Subsection \ref{ssec:closure} deals with the closure relation. Finally, in subsection \ref{ssec:quality}, we derive some qualitative properties on the total concentration $c_{\rm tot}(t,x)$ and the temperature field $T(t,x)$.

%%%%%%%%%%%%%%%%%%%%%%%%%%%%%
\section{Kinetic model and asymptotics}\label{sec:kinetic}
%%%%%%%%%%%%%%%%%%%%%%%%%%%%%

%%%%%%%%%%%%%%%%%%%%%%%%%%%%%
\subsection{Kinetic model}\label{ssec:kinetic}
%%%%%%%%%%%%%%%%%%%%%%%%%%%%%

The starting point of our analysis is a system of Boltzmann-type equations that models the evolution of a mixture of ideal monatomic inert gases $\mathcal{A}_i$, $i=1,\dots, n$ with $n\ge2$, subject to elastic mechanical collisions between each other. More precisely, for the unknown probability density functions $f_i(t,x,v)\ge0$, we consider the Cauchy problem
\begin{align}
\partial_t f_i+v \cdot \nabla_x f_i
& =  \sum_{j=1}^n Q_{ij}(f_i,f_j) 
\quad & \mbox{ for }(t,x,v)\in (0,\infty)\times\R^3\times\R^3, \label{eq:BE_nonscaled}
\\
f_i(0,x,v)
& = f^{\rm in}_i (x,v)
\quad  & \mbox{ for }(x,v)\in \R^3\times\R^3, \label{eq:BE_nonscaled_initial}
\end{align}
for each $i=1,\dots,n$ where $Q_{ij}(\cdot,\cdot)$ denotes the bilinear integral operator describing the collisions of molecules of species $\mathcal A_i$ with molecules of species $\mathcal A_j$. In the above model, we have supposed that there are no external forces acting on the gas mixture. Hence any given particle travels in a straight line (ballistic motion) until it encounters another particle resulting in a mechanical collision which is assumed here to be elastic. To facilitate the definition of the collision operator $Q_{ij}(\cdot,\cdot)$, consider two particles belonging to the species $\mathcal A_i$ and $\mathcal A_j$, $1\le i,j\le n$, with masses $m_i$, $m_j$, and pre-collisional velocities $v'$, $v_*'$. A microscopic collision is an instantaneous phenomenon which modifies the velocities of the particles, which become $v$ and $v_*$, obtained by imposing the conservation of both momentum and kinetic energy:
\begin{equation} \label{eq:coll_momen_energ}
m_i v'+m_j v'_* = m_i v+m_j v_*,\qquad 
\frac 12 m_i\,|v'|^2 + \frac 12 m_j\,|v_*'|^2 = \frac 12 m_i\,|v|^2 + \frac 12
m_j\,|v_*|^2.
\end{equation}
The previous equations allow us to write $v'$ and $v'_*$ in terms of $v$ and $v_*$:
\begin{equation} \label{eq:v*}
v'=\frac{1}{m_i +m_j}(m_i v+m_j v_* + m_j \vert v- v_*\vert\, \sigma), \qquad 
v'_*= \frac{1}{m_i+m_j}(m_i v+m_j v_* - m_i \vert v- v_*\vert\, \sigma),
\end{equation}
where $\sigma\in\S^2$ describes the two degrees of freedom in \eqref{eq:coll_momen_energ}.

If $f$ and $g$ are nonnegative functions, the operator describing the collisions between molecules of species $\mathcal A_i$ and molecules of species $\mathcal A_j$ is defined by
\begin{equation} \label{eq:Q_bi}
Q_{ij}(f,g)(v)
:= 
\iint\limits_{\R^3\times\S^2} B_{ij}(v,v_*,\sigma)\Big[ f(v')g(v'_*) -
f(v)g(v_*) \Big]  \,\xd\sigma\, \xd v_*
\end{equation}
where $v'$ and $v'_*,$ are given by the relation \eqref{eq:v*}, and the cross sections $B_{ij}$ satisfy the microreversibility assumptions: $B_{ij}(v,v_*,\sigma)=B_{ji}(v_*,v, \sigma)$ and $B_{ij}(v,v_*,\sigma)=B_{ij}(v',v'_*,\sigma)$. 

The operators $Q_{ij}$ can be written in weak form. For example, by using the changes of variables $(v,v_*) \mapsto (v_*,v)$ and $(v,v_*) \mapsto (v',v'_*)$, we have
\begin{multline} \label{eq:weak_bi_utile}
\int\limits_{\R^3}Q_{ij}(f,g)(v)\,\psi(v)\, \xd v\\
= -\frac 12 \iiint\limits_{\R^6\times\S^2} B_{ij}(v,v_*, \sigma) \Big[ f(v')g(v'_*) -
f(v)g(v_*) \Big] \Big[ \psi(v') -\psi(v) \Big] \, \xd\sigma\,\xd v\, \xd v_*\\ 
=\iiint\limits_{\R^6\times\S^2} B_{ij}(v,v_*, \sigma)\,f(v)g(v_*)\, 
\Big[ \psi(v') -\psi(v) \Big] \, \xd\sigma\,\xd v\, \xd v_*,
\end{multline}
or
\begin{multline} \label{eq:weak_el_bi} 
\int\limits_{\R^3}Q_{ij}(f,g)(v)\,\psi(v)\, \xd v + \int\limits_{\R^3}Q_{ji}(g,f)(v)\, \phi(v)\, \xd v\\
=-\frac 12 \iiint\limits_{\R^6\times\S^2}B_{ij}(v,v_*,\sigma) \Big[ f(v')g(v'_*) -
f(v)g(v_*) \Big ] \Big[\psi(v') +
\phi(v'_*)-\psi(v)-\phi(v_*)\Big]\,\xd\sigma\,\xd v\, \xd v_*,
\end{multline}
for any $\psi$, $\phi: \, \R^3\to \R$ such that the integrals on the left hand sides of \eqref{eq:weak_bi_utile} and \eqref{eq:weak_el_bi} are well defined. 

The collision kernels $B_{ij}$ only depend on the modulus of the relative velocity and on the cosine of the deviation angle, i.e.,
\begin{align*}
B_{ij}(v,v_*,\sigma) = B_{ij}\left( |v-v_*|,\cos\theta \right)
\quad
\mbox{ with }
\quad
\cos\theta=\frac{v-v_*}{|v-v_*|}\cdot\sigma.
\end{align*}
In order to ease the presentation and also to ensure that the resulting mathematical model for the molar concentrations be simple, throughout this work we stick to the case of Maxwellian molecules. More specifically, we shall work with collision kernels that are independent of the relative velocity, i.e., they are of the form
\begin{align}\label{eq:B-ij}
B_{ij}(v,v_*,\sigma) = b_{ij}(\cos\theta),
\end{align}
where we assume that the angular collision kernels $b_{ij}\in L^1(-1,+1)$ and are even. Observe that, because of the microreversibility assumption on the collision kernels, we have
\begin{align*}
B_{ij}(v,v_*,\sigma)=B_{ji}(v_*,v,\sigma)\implies b_{ij}\left(\frac{v-v_*}{|v-v_*|}\cdot\sigma\right) = b_{ji}\left(\frac{v_*-v}{|v-v_*|}\cdot\sigma\right),
\end{align*}
and that, by parity, $b_{ij}(\cos\theta) = b_{ji}(\cos\theta)$.
\begin{Rem}\label{rem:conservation}
Taking $\psi(v) = 1$ in the weak form \eqref{eq:weak_bi_utile}, yields
\begin{equation}\label{eq:consv}
\int\limits_{\R^3} Q_{ij}(f, g)(v) \, \xd v = 0
\qquad \mbox{ for all }i,j=1,\dots,n
\end{equation}
which helps us deduce the conservation of the total number of molecules of species $\mathcal A_i$. Moreover in \eqref{eq:weak_el_bi}, if $\psi(v) = m_i\,v$ and $\phi(v_*) = m_j\,v_*$, and then if
$\psi(v) =m_i\,|v|^2/2$ and $\phi(v)=m_j\,|v_*|^2/2$, we recover the conservation of the total momentum and of the total kinetic
energy during the collision between a particle of species $\mathcal A_i$ and a particle of species $\mathcal A_j$:
\begin{equation*}
\int\limits_{\R^3} Q_{ij}(f,g)(v) \, \left(
\begin{array}{c}
m_i\,v\\ m_i\,{|v|^2}/2
\end{array}
\right) \, \xd v
+ \, \int\limits_{\R^3}Q_{ji}(g,f)(v) \, \left(
\begin{array}{c}
m_j\,v \\ m_j\,{|v|^2}/2
\end{array}
\right) \, \xd v= 0.
\end{equation*}
\end{Rem}

%%%%%%%%%%%%%%%%%%%%%%%%%%%
\subsection{Diffuse scaling and main assumptions}\label{ssec:diffuse-scale}
%%%%%%%%%%%%%%%%%%%%%%%%%%%

In order to arrive at the diffusive limit, we introduce a scaling parameter $0<\eps\ll1$ which represents the mean free path. The space-time variables are scaled as $(t,x)\mapsto (\eps^2 t, \eps x)$. Note that the velocity variable is not scaled. The unknown distribution functions in the transformed variables are denoted by $f_i^{\eps}$. Each distribution function $f_i^\eps$ solves the following scaled version of \eqref{eq:BE_nonscaled}-\eqref{eq:BE_nonscaled_initial}:
\begin{align} 
\eps\, \partial_t f_i^{\eps}+v \cdot \nabla_x f_i^{\eps}
& =  \frac 1\eps
 \sum_{j=1}^n Q_{ij}(f_i^{\eps},f_j^{\eps}) 
\quad  & \mbox{ for }(t,x,v)\in (0,\infty)\times\R^3\times\R^3, \label{eq:BE_scaled}\\
f^\eps_i(0,x,v) & = 
\left(
f^{\rm in}_i
\right)^\eps  
 (x,v)
\quad & \mbox{ for }(x,v)\in \R^3\times\R^3. \label{eq:BE_scaled_initial}
\end{align}
The initial data are assumed to be such that the associated local macroscopic velocities are of $\mathcal{O}(\eps)$, i.e.,
\[
\int\limits_{\R^3}
v
\left(
f^{\rm in}_i
\right)^\eps  
 (x,v)
\, \xd v
= \eps\, c^{\rm in}_i(x) u^{\rm in}_i(x) 
\]
for some $c^{\rm in}_i:\R^3\to[0,\infty)$ and $u^{\rm in}_i:\R^3\to\R^3$.

The main assumption in our work is that the evolution following \eqref{eq:BE_scaled} keeps the distribution functions $f^\eps_i(t,x,v)$ in local Maxwellian states. We hence suppose that there exist the local densities $c^\eps_i: [0,\infty)\times \R^3 \to [0,\infty)$, the local macroscopic velocities $u^\eps_i: [0,\infty)\times\R^3 \to \R^3$ and the local temperatures $T^\eps_i: [0,\infty)\times\R^3 \to [0,\infty)$ such that the solution to the scaled Boltzmann system \eqref{eq:BE_scaled} has the following Maxwellian structure:
\begin{align}\label{eq:Maxwellian}
f_i^\eps (t, x, v)
= c^\eps_i(t,x)
\left(
\frac{m_i}{2\pi k T^\eps_i(t,x)}
\right)^{3/2}
e^{-m_i |v - \eps u^\eps_i(t,x)|^2/2 k T^\eps_i(t,x)}
\end{align} 
where $k$ is the Boltzmann constant. We record the zeroth, first and second moments for the above Maxwellian states:
\begin{align}\label{eq:moments_012}
\bigintss\limits_{\R^3} f^\eps_i(t,x,v) 
\left(
\begin{array}{c}
1\\ v \\ |v|^2
\end{array}
\right) \, \xd v
=
\left(
\begin{array}{c}
 c^\eps_i(t,x)\\[0.1 cm]
\eps\, c^\eps_i(t,x) u^\eps_i(t,x)\\[0.1 cm]
\frac{3k}{m_i} c^\eps_i(t,x) T^\eps_i(t,x) + \eps^2 c^\eps_i(t,x) |u^\eps_i(t,x)|^2
\end{array}
\right).
\end{align}
\textsc{notation:} For $\ell\in\{1, 2, 3\}$, we denote by $w_{(\ell)}$ the $\ell$-th component of any vector $w\in\R^3$.\\
Note that we postulate the ansatz \eqref{eq:Maxwellian} for the distribution functions. This line of attack to address diffusion limit procedures in the context of Boltzmann models is borrowed from \cite{bou-gre-sal-15, hur-sal-16}.
Also, note that the choice of $\mathcal{O}(\eps)$ local macroscopic velocities in the ansatz \eqref{eq:Maxwellian} results in the first moment of the distribution functions to be of $\mathcal{O}(\eps)$ since we are only interested in the pure diffusive dynamics.

 %%%%%%%%%%%%%%%%%%%%%%%%%
\subsection{Balance laws}\label{ssec:balance-laws}
%%%%%%%%%%%%%%%%%%%%%%%%%
To derive a macroscopic description out of the mesoscopic dynamics of the Boltzmann-type equations such as \eqref{eq:BE_scaled}, we need to arrive at balance equations obtained by integrating the transport model \eqref{eq:BE_scaled} with respect to the velocity variable $v$ only. The next result records various macroscopic equations associated with the scaled Boltzmann-like system  
\eqref{eq:BE_scaled}-\eqref{eq:BE_scaled_initial}.

\begin{Prop}\label{prop:balance-laws}
Suppose that the evolution according to \eqref{eq:BE_scaled} keeps the distribution functions $f^\eps_i(t,x,v)$ in the local Maxwellian states \eqref{eq:Maxwellian} for all time $t>0$. Then, the local macroscopic observables $(c^\eps_i, u^\eps_i, T^\eps)$ solve the following equations.
We have the mass balance equations:
\begin{align}\label{eq:eps-mass-bal}
\partial_t c^\eps_i 
+ \nabla_x \cdot (c^\eps_i u^\eps_i) = 0
\qquad 
\mbox{ for }(t,x) \in (0,\infty)\times\R^3,
\end{align}
for each $1\le i \le n$. We further have the momentum balance:
\begin{align}\label{eq:eps-momen-bal}
\eps^2 \Big( \partial_t\left( c^\eps_i u^\eps_i\right) + \nabla_x \cdot  \left( c^\eps_i u^\eps_i \otimes u^\eps_i \right) \Big)
+ \frac{k}{m_i} \nabla_x \left( c^\eps_i T^\eps_i \right)
= 
\Theta^\eps_i
\qquad 
\mbox{ for }(t,x) \in (0,\infty)\times\R^3,
\end{align}
for each $1\le i\le n$, where the right hand side of \eqref{eq:eps-momen-bal} reads
\begin{align}\label{eq:theta-eps}
\Theta^\eps_i(t,x)
=
\sum_{j\not=i}
2\pi \|b_{ij}\|_{L^1}
\frac{m_j}{m_i + m_j}
\Big(
c^\eps_i c^\eps_j u^\eps_j 
-
c^\eps_j c^\eps_i u^\eps_i
\Big)
+ \mathcal{O}(\eps).
\end{align}
Furthermore, the energy balance reads
\begin{align}\label{eq:eps-energy-bal}
&
\eps \left(
\frac{3k}{m_i}
\partial_t  \left( c^\eps_i T^\eps_i \right)
+ \frac{5k}{m_i} \nabla_x \cdot  \left( c^\eps_i T^\eps_i u^\eps_i \right) \right)
+ \eps^3 \left( \partial_t  \left( c^\eps_i |u^\eps_i|^2 \right)
+ \frac{3k}{m_i} \nabla_x \cdot  \left( c^\eps_i T^\eps_i |u^\eps_i|^2 u^\eps_i \right) \right)
= \Xi^\eps_i
\end{align}
for $(t,x) \in (0,\infty)\times\R^3$ and for each $1\le i\le n$, where the right hand side of \eqref{eq:eps-energy-bal} reads
\begin{equation}\label{eq:xi-eps}
\begin{aligned}
\Xi^\eps_i (t,x) 
& =
\frac{1}{\eps} \sum_{j\not=i}
\|b_{ij}\|_{L^1}
\frac{m_j}{(m_i + m_j)^2}
c^\eps_i c^\eps_j \left(T^\eps_i-T^\eps_j\right)
\\
& + \eps\, \sum_{j\not=i}
2 \|b_{ij}\|_{L^1} c^\eps_i  c^\eps_j m_j
\left(
\frac{(m_j u^\eps_j + m_i u^\eps_i)\cdot (u^\eps_j - u^\eps_i)}{(m_i + m_j)^2}
\right).
\end{aligned}
\end{equation}
\end{Prop}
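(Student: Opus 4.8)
The plan is to obtain all three families of balance laws by one and the same device: for each collision invariant $\psi(v)$ taken among $1$, the components $v_{(\ell)}$ ($\ell=1,2,3$) and $|v|^2$, multiply the scaled equation \eqref{eq:BE_scaled} by $\psi(v)$ and integrate over $v\in\R^3$, which gives
\[
\eps\,\pa_t\!\int_{\R^3}\psi(v)\,f_i^\eps\,\xd v
+\na_x\cdot\!\int_{\R^3}v\,\psi(v)\,f_i^\eps\,\xd v
=\frac1\eps\sum_{j=1}^n\int_{\R^3}Q_{ij}(f_i^\eps,f_j^\eps)(v)\,\psi(v)\,\xd v .
\]
It then remains to (a) express the velocity averages on the left in terms of $(c_i^\eps,u_i^\eps,T_i^\eps)$ through the Maxwellian ansatz \eqref{eq:Maxwellian}, and (b) evaluate the collision averages on the right via the weak form \eqref{eq:weak_bi_utile}.

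For step (a), the zeroth, first and second moments are already recorded in \eqref{eq:moments_012}; the only additional averages needed are the two flux tensors $\int v\otimes v\,f_i^\eps\,\xd v$ and $\int v\,|v|^2\,f_i^\eps\,\xd v$. Both are computed by the shift $w=v-\eps u_i^\eps$, under which $f_i^\eps$ becomes a centred Gaussian of variance $kT_i^\eps/m_i$ in each component, so that odd $w$-moments vanish; one finds $\int v\otimes v\,f_i^\eps\,\xd v=\tfrac{k}{m_i}c_i^\eps T_i^\eps\,\mathrm{Id}+\eps^2\,c_i^\eps\,u_i^\eps\otimes u_i^\eps$ and $\int v\,|v|^2\,f_i^\eps\,\xd v=\tfrac{5\eps k}{m_i}c_i^\eps T_i^\eps u_i^\eps+\mathcal O(\eps^3)$. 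Substituting these, the $\psi=1$ identity divided by $\eps$ is the mass balance \eqref{eq:eps-mass-bal}; the $\psi=v$ identity, after writing $\eps\,\pa_t(\eps c_i^\eps u_i^\eps)=\eps^2\pa_t(c_i^\eps u_i^\eps)$, is the left-hand side of \eqref{eq:eps-momen-bal}; and $\psi=|v|^2$ is the left-hand side of \eqref{eq:eps-energy-bal}.

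For step (b), the case $\psi=1$ vanishes termwise by \eqref{eq:consv}. For $\psi=v$ I would use the last line of \eqref{eq:weak_bi_utile} with $\psi(v')-\psi(v)=v'-v=\frac{m_j}{m_i+m_j}\bigl(v_*-v+|v-v_*|\,\sigma\bigr)$, read off from \eqref{eq:v*}; performing the $\sigma$-integration first, the term proportional to $\sigma$ is a multiple of the odd angular moment $\int_{-1}^{1}s\,b_{ij}(s)\,\xd s$, which vanishes since $b_{ij}$ is even, and $\int_{\S^2}b_{ij}(\cos\theta)\,\xd\sigma=2\pi\|b_{ij}\|_{L^1}$, leaving $2\pi\|b_{ij}\|_{L^1}\tfrac{m_j}{m_i+m_j}(v_*-v)$; integrating this against $f_i^\eps(v)f_j^\eps(v_*)$ with the zeroth and first moments of \eqref{eq:moments_012} produces $\Theta_i^\eps$ as in \eqref{eq:theta-eps}, the remaining contributions (each carrying an extra factor of the macroscopic velocity) being absorbed in the $\mathcal O(\eps)$. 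For $\psi=|v|^2$ I would pass to the centre-of-mass variables $V=\frac{m_i v+m_j v_*}{m_i+m_j}$, $z=v-v_*$, in which the collision fixes $V$ and $|z|$ and replaces $z$ by $|z|\sigma$, so that $|v'|^2-|v|^2=\frac{2m_j}{m_i+m_j}\,V\cdot(|z|\sigma-z)$; the $\sigma$-integration again kills the $\sigma$-linear part by parity of $b_{ij}$ and leaves a multiple of $V\cdot z$, which expands as a linear combination of $|v|^2$, $|v_*|^2$ and $v\cdot v_*$. Averaging against $f_i^\eps f_j^\eps$ with the moments of \eqref{eq:moments_012} then yields the temperature-relaxation term at order $\eps^{-1}$ and the velocity correction at order $\eps$, i.e. $\Xi_i^\eps$ of \eqref{eq:xi-eps}. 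In both the $\psi=v$ and $\psi=|v|^2$ computations the $j=i$ summand vanishes identically, consistent with the sums being restricted to $j\neq i$.

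The only genuinely delicate point is the bookkeeping of the energy-exchange average: one must treat the $\sigma$-integral and the $(v,v_*)$-integral of $|v'|^2-|v|^2$ jointly, and it is precisely the centre-of-mass substitution together with the evenness of the angular kernel $b_{ij}$ that makes the relative-speed factor $|v-v_*|$ disappear and collapses the expression into the clean form displayed in \eqref{eq:xi-eps}. Once that is in place, matching the powers of $\eps$ attached to each moment in \eqref{eq:moments_012} so that the orders in \eqref{eq:eps-momen-bal} and \eqref{eq:eps-energy-bal} come out as stated is routine.
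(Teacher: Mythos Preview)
Your proposal is correct and, for the mass and momentum balances, proceeds exactly as the paper does: multiply \eqref{eq:BE_scaled} by $1,v_{(\ell)},|v|^2$, integrate in $v$, compute the left-hand side moments via the shift $w=v-\eps u_i^\eps$, and handle the $\psi=v$ collision average through \eqref{eq:weak_bi_utile} together with the parity of $b_{ij}$ (the paper simply cites \cite{bou-gre-sal-15} for that step).

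Where you genuinely diverge from the paper is the energy-exchange term $\Xi_i^\eps$. The paper expands $|v'_{(\ell)}|^2-|v_{(\ell)}|^2$ directly from \eqref{eq:v*} and breaks the result into five integrals $\mathcal I_1,\dots,\mathcal I_5$; the cross terms in $\sigma$ (their $\mathcal I_4$) are killed by parity, a residual $|v-v_*|^2$ term survives in $\mathcal I_5$, and the final answer emerges only after recombining $\mathcal I_1,\mathcal I_2,\mathcal I_3,\mathcal I_5$. Your centre-of-mass substitution $V=\tfrac{m_iv+m_jv_*}{m_i+m_j}$, $z=v-v_*$ is cleaner: since $|v'|^2=|V|^2+\tfrac{2m_j}{m_i+m_j}|z|\,V\!\cdot\!\sigma+\tfrac{m_j^2}{(m_i+m_j)^2}|z|^2$ and $|v|^2$ has the same expression with $z$ in place of $|z|\sigma$, the $|z|^2$ contributions cancel \emph{before} any integration, so no analogue of $\mathcal I_5$ ever appears. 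After the $\sigma$-average removes the $|z|\sigma$ piece by evenness of $b_{ij}$, you are left with a single term proportional to $V\!\cdot\! z=\tfrac{1}{m_i+m_j}\bigl(m_i|v|^2-m_j|v_*|^2+(m_j-m_i)v\!\cdot\! v_*\bigr)$, whose $(v,v_*)$-average against $f_i^\eps f_j^\eps$ is read off immediately from \eqref{eq:moments_012}. This buys you a shorter computation with no intermediate cancellations to track; the paper's decomposition is more pedestrian but makes each Gaussian integral explicit. One minor remark: for $\psi=v$ your ``remaining contributions\dots\ absorbed in the $\mathcal O(\eps)$'' is in fact empty---once the $\sigma$-linear term is gone the result is exact, so $\Theta_i^\eps$ has no $\mathcal O(\eps)$ remainder from the cross collisions.
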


\begin{proof}
To arrive at the balance equations \eqref{eq:eps-mass-bal}-\eqref{eq:eps-momen-bal}-\eqref{eq:eps-energy-bal}, multiply the scaled equation \eqref{eq:BE_scaled} by $(1,v_{(\ell)}, |v|^2)$ and integrate over all possible velocities in $\R^3$ yielding
\begin{align}\label{eq:balance-comp-1}
\eps\, \partial_t \bigintss\limits_{\R^3} f^\eps_i(t,x,v) 
\left(
\begin{array}{c}
1\\ v_{(\ell)} \\ |v|^2
\end{array}
\right) \, \xd v
+ \nabla_x \cdot 
\bigintss\limits_{\R^3} v f^\eps_i(t,x,v) 
\left(
\begin{array}{c}
1\\ v_{(\ell)} \\ |v|^2
\end{array}
\right) \, \xd v
=
\frac{1}{\eps}
\sum_{j=1}^n
\bigintss\limits_{\R^3} Q_{ij}(f^\eps_i, f^\eps_j)
\left(
\begin{array}{c}
1\\ v_{(\ell)} \\ |v|^2
\end{array}
\right) \, \xd v.
\end{align}
The assumption of Maxwellian structure \eqref{eq:Maxwellian} on the solution $f^\eps_i(t,x,v)$ helps us compute the divergence term in the second line on the left hand side of the above equation:
\begin{equation}\label{eq:div-vell}
\begin{aligned}
\nabla_x \cdot & \left( \int\limits_{\R^3} v_{(\ell)} f^\eps_i(t,x,v) v\, \xd v \right)
\\
& =
\frac{k}{m_i}
\frac{\partial}{\partial x_{(\ell)}}
\Big(
c^\eps_i(t,x)
T^\eps_i(t,x)
\Big)
+ \eps^2
\sum_{k=1}^3
\frac{\partial}{\partial x_{(k)}}
\Big(
c^\eps_i (t,x)
\left( u^\eps_i \right)_{(k)} (t,x)
\left( u^\eps_i \right)_{(\ell)} (t,x)
\Big).
\end{aligned}
\end{equation}
Next, we have for the divergence term in the third line on the left hand side of \eqref{eq:balance-comp-1}:
\begin{equation*}
\begin{aligned}
& \nabla_x
\cdot 
\left(
\int\limits_{\R^3}
|v|^2
v f^\eps_i(v)
\, {\rm d}v
\right)
=
\sum_{k=1}^3
\frac{\partial}{\partial x_{(k)}}
\int\limits_{\R^3}
\left(
|v_{(1)}|^2
+ |v_{(2)}|^2
+ |v_{(3)}|^2
\right)
v_{(k)}
f^\eps_i(v)
\, {\rm d}v
\\
& =
\sum_{k=1}^3
\frac{\partial}{\partial x_{(k)}}
\int\limits_{\R^3}
\left(
|v_{(1)} + \eps \left( u^\eps_i \right)_{(1)} |^2
+ |v_{(2)}+ \eps \left( u^\eps_i \right)_{(2)}|^2
+ |v_{(3)}+ \eps \left( u^\eps_i \right)_{(3)}|^2
\right) \times
\\
&
\hspace{3.0 cm} \left( v_{(k)} + \eps \left( u^\eps_i \right)_{(k)} \right)
c^\eps_i (t,x)
\left(
\frac{m_i}{2\pi k T^\eps_i(t,x)}
\right)^{3/2}
e^{\frac{-m_i |v|^2}{2 k T^\eps_i(t,x)}}
\, {\rm d}v.
\end{aligned}
\end{equation*}
Therefore, we have:
\begin{align}\label{eq:div-v2}
\nabla_x
\cdot 
\left(
\int\limits_{\R^3}
|v|^2
v f^\eps_i(v)
\, {\rm d}v
\right)
= \eps 
\frac{5k}{m_i}
\nabla_x \cdot 
\left(
c^\eps_i T^\eps_i u^\eps_i
\right)
+
\eps^3
\frac{3k}{m_i}
\nabla_x \cdot 
\left(
c^\eps_i T^\eps_i |u^\eps_i|^2 u^\eps_i
\right).
\end{align}
Observation \eqref{eq:consv} in Remark \ref{rem:conservation} implies that the first line on the right hand side of \eqref{eq:balance-comp-1} vanishes. Under the Maxwellian molecules assumption \eqref{eq:B-ij} and under the local Maxwellian states assumption \eqref{eq:Maxwellian} on the solution $f^\eps_i(t,x,v)$, the second line on the right hand side of \eqref{eq:balance-comp-1} has already been computed by L. Boudin, B. Grec and F. Salvarani \cite[Section~4]{bou-gre-sal-15}. We will simply borrow the end result of their computation below:
\begin{align}\label{eq:line2-rhs}
\frac{1}{\eps} \sum_{j=1}^n \int\limits_{\R^3} v_{(\ell)} Q_{ij}(f^\eps_i, f^\eps_j)(v)\, \xd v
=
\sum_{j\not=i}
\frac{2\pi m_j \|b_{ij}\|_{L^1}}{m_i + m_j}
\Big(
c^\eps_i c^\eps_j \left( u^\eps_j \right)_{(\ell)}
-
c^\eps_j c^\eps_i \left( u^\eps_i \right)_{(\ell)}
\Big)
+ \mathcal{O}(\eps).
\end{align}
On the other hand to arrive at the expression \eqref{eq:xi-eps}, let us consider the right hand side of the third line in \eqref{eq:balance-comp-1}:
\begin{align*}
\frac{1}{\eps} \sum_{j\not=i} \int\limits_{\R^3} |v|^2 Q_{ij}(f^\eps_i, f^\eps_j) \, \xd v
& =
\frac{1}{\eps}
\sum_{j\not=i}\,
\iiint\limits_{\R^6\times\S^2}
B_{ij}(v,v_*,\sigma)
f^\eps_i(v)
f^\eps_j(v_*)
\left(
|v'|^2 - |v|^2
\right)
\, {\rm d}\sigma\, {\rm d}v\, {\rm d}v_*
\\
& =
\frac{1}{\eps}
\sum_{j\not=i} 
\sum_{\ell=1}^3\, 
\iiint\limits_{\R^6\times\S^2}
B_{ij}(v,v_*,\sigma)
f^\eps_i(v)
f^\eps_j(v_*)
\left(
|v'_{(\ell)}|^2 
- |v_{(\ell)}|^2 
\right)
\, {\rm d}\sigma\, {\rm d}v\, {\rm d}v_*
\end{align*}
where we have used the weak form \eqref{eq:weak_bi_utile} with $\psi(v)=|v|^2$.
Next, using the relation \eqref{eq:v*}, the above expression can be further simplified as
\begin{align*}
\frac{1}{\eps} \sum_{j\not=i} \int\limits_{\R^3} |v|^2 Q_{ij}(f^\eps_i, f^\eps_j) \, \xd v
& =
\frac{1}{\eps} \sum_{j\not=i} \sum_{\ell=1}^3 \|b_{ij}\|_{L^1}
\left(
\frac{m^2_i}{(m_i + m_j)^2}
- 1
\right)
\iint\limits_{\R^6} 
f^\eps_i(v)
f^\eps_j(v_*)
|v_{(\ell)}|^2
\, {\rm d}v \, {\rm d}v_*
\\
& +
\frac{1}{\eps} \sum_{j\not=i} \sum_{\ell=1}^3 \|b_{ij}\|_{L^1}
\frac{m^2_j}{(m_i + m_j)^2}
\iint\limits_{\R^6} 
f^\eps_i(v)
f^\eps_j(v_*)
|v_{*(\ell)}|^2
\, {\rm d}v \, {\rm d}v_*
\\
& +
\frac{1}{\eps} \sum_{j\not=i} \sum_{\ell=1}^3 \|b_{ij}\|_{L^1}
\frac{2m_i m_j}{(m_i + m_j)^2}
\iint\limits_{\R^6} 
f^\eps_i(v)
f^\eps_j(v_*)
v_{(\ell)} v_{*(\ell)}
\, {\rm d}v \, {\rm d}v_*
\end{align*}
\begin{align*}
\hspace{3.0 cm} &
 +
\frac{1}{\eps} \sum_{j\not=i} \sum_{\ell=1}^3
\frac{2}{(m_i + m_j)^2}
\iiint\limits_{\R^6\times\S^2}
B_{ij}(v,v_*,\sigma)
f^\eps_i(v)
f^\eps_j(v_*)
\times
\\
&
\hspace{2.0 cm}\left(
m_i m_j |v-v_*| v_{(\ell)}
+
m^2_j |v-v_*| v_{*(\ell)}
\right)
\sigma_{(\ell)}
\, {\rm d}\sigma\, {\rm d}v\, {\rm d}v_*
\\
& + 
\frac{1}{\eps} \sum_{j\not=i} \sum_{\ell=1}^3
\frac{m^2_j}{(m_i + m_j)^2}
\iiint\limits_{\R^6\times\S^2}
B_{ij}(v,v_*,\sigma)
f^\eps_i(v)
f^\eps_j(v_*)
|v-v_*|^2 |\sigma_{(\ell)}|^2
\, {\rm d}\sigma\, {\rm d}v\, {\rm d}v_*
\\
& =: 
\mathcal{I}_1 + \mathcal{I}_2 + \mathcal{I}_3 + \mathcal{I}_4 + \mathcal{I}_5.
\end{align*}
Substituting the local Maxwellian structure \eqref{eq:Maxwellian} for the distribution function $f^\eps_i(t,x,v)$ in the above integrals and computing the thus obtained Gaussian integrals yield
\begin{equation*}\label{eq:mathcal-I-123}
\begin{aligned}
\mathcal{I}_1 
& = \frac{1}{\eps} \sum_{j\neq i}
\left(
\|b_{ij}\|_{L^1}
\left(
\frac{m^2_i}{(m_i + m_j)^2}
- 1
\right)
c^\eps_i c^\eps_j
\frac{3kT^\eps_i}{m_i}
+
\eps^2
\|b_{ij}\|_{L^1}
\left(
\frac{m^2_i}{(m_i + m_j)^2}
- 1
\right)
c^\eps_i c^\eps_j
|u^\eps_i|^2
\right),
\\
\mathcal{I}_2 
& = \frac{1}{\eps} \sum_{j\neq i}
\left(
\|b_{ij}\|_{L^1}
\frac{m^2_j}{(m_i + m_j)^2}
c^\eps_i c^\eps_j
\frac{3kT^\eps_j}{m_j}
+
\eps^2
\|b_{ij}\|_{L^1}
\frac{m^2_j}{(m_i + m_j)^2}
c^\eps_i c^\eps_j
|u^\eps_j|^2
\right),
\\
\mathcal{I}_3
& = \frac{1}{\eps} \sum_{j\neq i}
\left(
\eps^2 
\|b_{ij}\|_{L^1}
\frac{2m_i m_j}{(m_i + m_j)^2}
c^\eps_i c^\eps_j
\left(
u^\eps_i
\cdot 
u^\eps_j
\right)
\right).
\end{aligned}
\end{equation*}
Now, to treat the integral $\mathcal{I}_4$, let us introduce the polar variable $\varphi\in [0,2\pi]$ so that we can find the relationships between the Euclidean coordinates of $\sigma$ and the spherical ones, namely 
\begin{align*}
\sigma_{(1)}=\sin \theta\cos\varphi, \quad \sigma_{(2)}=\sin
\theta\sin\varphi, \quad \sigma_{(3)}=\cos \theta.
\end{align*}
In the integral $\mathcal{I}_4$, note that the terms for $\ell=1$ or $2$  in the sum are zero because
\begin{align*}
\int\limits_0^{2\pi}\sin\varphi\,\xd\varphi=\int\limits_0^{2\pi}\cos\varphi\,\xd\varphi=0,
\end{align*}
and for $\ell=3$, because $b_{ij}$ is even, one has
\begin{align*}
\int\limits_{\S^2} b_{ij}\left(\frac{v-v_*}{|v-v_*|}\cdot\sigma\right)\sigma_{(3)} \,
\xd\sigma=2\pi\int\limits_0^\pi\sin\theta\cos\theta\,
b_{ij}(\cos\theta)\,\xd\theta=2\pi\int\limits_{-1}^1 \eta \,b_{ij}(\eta)\,\xd\eta=0.
\end{align*}
Hence the integral $\mathcal{I}_4$ vanishes. Next, we get to the computation of the integral $\mathcal{I}_5$. Before we go further, we make the following observation:
\begin{align*}
\sum_{\ell=1}^3
\int\limits_{\S^2}
B_{ij}(v,v_*,\sigma)
|\sigma_{(\ell)}|^2
\, {\rm d}\sigma
= \int\limits_{\S^2}
B_{ij}(v,v_*,\sigma)
\, {\rm d}\sigma =
\|b_{ij}\|_{L^1}
\end{align*}
because $|\sigma |^2=1$. Hence the integral $\mathcal{I}_5$ reduces to
\begin{align*}
\mathcal{I}_5 
=
\frac{1}{\eps} \sum_{j\not=i}
\|b_{ij}\|_{L^1}
\frac{m^2_j}{(m_i + m_j)^2}
\iint\limits_{\R^6}
f^\eps_i(v)
f^\eps_j(v_*)
|v-v_*|^2
\, {\rm d}v\, {\rm d}v_*.
\end{align*}
Substituting the local Maxwellian structures \eqref{eq:Maxwellian} for the distribution functions $f^\eps_i$ and performing the change of variables: $(v,v_*)\mapsto (v+\eps u^\eps_i, v_*+\eps u^\eps_j)$ yields
\begin{align*}
\mathcal{I}_5 
& =
\frac{1}{\eps} \sum_{j\not=i}
\|b_{ij}\|_{L^1}
\frac{m^2_j}{(m_i + m_j)^2}
c^\eps_i c^\eps_j
\left(
\frac{m_i}{2\pi k T^\eps_i}
\right)^{3/2}
\left(
\frac{m_j}{2\pi k T^\eps_j}
\right)^{3/2}\times
\\
& \hspace{2.0 cm}\iint\limits_{\R^6}
\left|v + \eps u^\eps_i - v_* - \eps u^\eps_j\right|^2
e^{-m_i |v|^2/2 k T^\eps_i}
e^{-m_j |v_*|^2/2 k T^\eps_j}
\, {\rm d}v \, {\rm d}v_*
\\
& =
\frac{1}{\eps} \sum_{j\not=i}
\|b_{ij}\|_{L^1}
\frac{m^2_j}{(m_i + m_j)^2}
c^\eps_i c^\eps_j
\left(
\frac{m_i}{2\pi k T^\eps_i}
\right)^{3/2}
\left(
\frac{m_j}{2\pi k T^\eps_j}
\right)^{3/2}\times
\\
&\hspace{0.8 cm}
\sum_{\ell=1}^3
\iint\limits_{\R^6}
\Big(
|v_{(\ell)}|^2 
+ 
\eps^2 |\left( u^\eps_i\right)_{(\ell)}|^2 
+
2 \eps v_{(\ell)} \left( u^\eps_i\right)_{(\ell)}
+
|v_{*(\ell)}|^2 
+
\eps^2 |\left( u^\eps_j\right)_{(\ell)}|^2 
+
2 \eps v_{*(\ell)} \left( u^\eps_j\right)_{(\ell)}
\\
&\hspace{1.0 cm}
- 2 v_{(\ell)} v_{*(\ell)}
-2 \eps v_{(\ell)} \left( u^\eps_j\right)_{(\ell)}
- 2 \eps v_{*(\ell)} \left( u^\eps_i\right)_{(\ell)}
-2 \eps^2 \left( u^\eps_i\right)_{(\ell)} \left( u^\eps_j\right)_{(\ell)}
\Big)
e^{-m_i |v|^2/2 k T^\eps_i}
e^{-m_j |v_*|^2/2 k T^\eps_j}
\, {\rm d}v \, {\rm d}v_*.
\end{align*}
Note that some of the Gaussian integrals in the above sum vanish. Thus, the integral $\mathcal{I}_5$ simplifies as follows:
\begin{align*}
\mathcal{I}_5
& =
\frac{1}{\eps} \sum_{j\not=i}
\|b_{ij}\|_{L^1}
\frac{m^2_j}{(m_i + m_j)^2}
c^\eps_i c^\eps_j
\left(
\frac{m_i}{2\pi k T^\eps_i}
\right)^{3/2}
\left(
\frac{m_j}{2\pi k T^\eps_j}
\right)^{3/2}
\sum_{\ell=1}^3
\iint\limits_{\R^6}
\Big(
|v_{(\ell)}|^2 
+
|v_{*(\ell)}|^2 
\\
&\hspace{2.0 cm}
+ 
\eps^2 |\left( u^\eps_i\right)_{(\ell)}|^2 
+
\eps^2 |\left( u^\eps_j\right)_{(\ell)}|^2 
-2
\eps^2 \left( u^\eps_i\right)_{(\ell)} \left( u^\eps_j\right)_{(\ell)}
\Big)
e^{-m_i |v|^2/2 k T^\eps_i}
e^{-m_j |v_*|^2/2 k T^\eps_j}
\, {\rm d}v \, {\rm d}v_*
\\
& =
\frac{1}{\eps} \sum_{j\not=i}
\|b_{ij}\|_{L^1}
\frac{m^2_j}{(m_i + m_j)^2}
c^\eps_i c^\eps_j
\left(
\frac{3kT^\eps_i}{m_i}
+
\frac{3kT^\eps_j}{m_j}
+
\eps^2 |u^\eps_i|^2 
+
\eps^2 |u^\eps_j|^2 
-2
\eps^2 u^\eps_i \cdot u^\eps_j
\right).
\end{align*}
Summing all the above integral computations together, i.e., $\mathcal{I}_1$ through $\mathcal{I}_5$, we have
\begin{equation}\label{eq:line3-rhs}
\begin{aligned}
\frac{1}{\eps} \sum_{j\not=i} \int\limits_{\R^3} |v|^2 Q_{ij}(f^\eps_i, f^\eps_j) \, \xd v
& =
\frac{1}{\eps} \sum_{j\not=i}
\|b_{ij}\|_{L^1}
\frac{m_j}{(m_i + m_j)^2}
c^\eps_i c^\eps_j \left(T^\eps_i-T^\eps_j\right)
\\
& + \eps\, \sum_{j\not=i}
2 \|b_{ij}\|_{L^1} c^\eps_i  c^\eps_j m_j
\left(
\frac{(m_j u^\eps_j + m_i u^\eps_i)\cdot (u^\eps_j - u^\eps_i)}{(m_i + m_j)^2}
\right).
\end{aligned}
\end{equation}
Finally, using the moments' computations \eqref{eq:moments_012}, the divergence terms \eqref{eq:div-vell}-\eqref{eq:div-v2} and the right hand side terms \eqref{eq:line2-rhs}-\eqref{eq:line3-rhs} in the balance equation \eqref{eq:balance-comp-1}, we have arrived at the result.
\end{proof}

In this article, we suppose that the cross sections are of Maxwellian type. Of course, it is possible to consider the problem under more general
assumptions on the cross sections. For example, in \cite{hur-sal-16} the authors considered collision kernels of the form
\begin{align*}
B_{ij}(v,v_*,\sigma) = b_{ij}\left(\cos(\theta)\right)\Phi(\vert v - v_*\vert)
\end{align*}
with the kinetic collision kernel having the structure (see \cite[section 4]{hur-sal-16} for precise details)
\begin{align*}
\Phi(\vert v - v_*\vert) = \sum_{n\in\mathbb{N}^*} a_n \left\vert v-v_*\right\vert^{2n}.
\end{align*}
It would be clearly feasible to handle such cross sections in the computations for $\Xi_i$ above -- see (18). 
However an inspection of the computations in the proof of Proposition \ref{prop:balance-laws} suggests that any such consideration of a general cross section would only complicate the computations, without giving a reasonable added value about the structure of the equation. For this reason, we have not considered here this possibility.
 
%%%%%%%%%%%%%%%%%%%%%%
\subsection{Asymptotic analysis}\label{ssec:asymptotic}
%%%%%%%%%%%%%%%%%%%%%%

We are now ready to consider, at the formal level the $\eps\to0$ limit of the system \eqref{eq:eps-mass-bal}-\eqref{eq:eps-momen-bal}-\eqref{eq:eps-energy-bal}.

In the following, let us set the fluxes for $i=1,\dots,n$:
\begin{align*}
J_i^\eps(t,x) := \frac{1}{\eps}\int\limits_{\R^3}v\, f_i^\eps(t,x,v)\,\xd v = c_i^{\eps}(t,x)u_i^{\eps}(t,x) \quad \mbox{ for } (t,x)\in(0,\infty)\times\R^3, 
\end{align*}
and denote, for any $t\ge0$ and $x\in\R^3$,
\begin{align*}
& c_i(t,x) := \lim_{\eps\to 0^+} c_i^{\eps}(t,x); & ~ & J_i(t,x) := \lim_{\eps\to0^+}J_i^\eps(t,x); 
\\[0.2 cm]
& T_i(t,x) := \lim_{\eps\to0^+}T_i^\eps(t,x); & ~ & c_{\rm tot}(t,x) := \lim_{\eps\to0^+}c^\eps_{\rm tot}(t,x),
\end{align*}
where 
$$
\displaystyle c^\eps_{\rm tot}(t,x) := \sum_{i=1}^n c^\eps_i(t,x).
$$
We first note that, at the leading order, all the kinetic temperatures of the components of the mixture converge to the same limit.

\begin{Lem}
Suppose that the distribution functions $f^\eps_i(t,x,v)$ preserve the local Maxwellian structure \eqref{eq:Maxwellian} for all time $t>0$. Then, we have
\begin{align*}
T^\eps_i (t,x) -T^\eps_j (t,x) = \mathcal{O}(\eps^2)
\qquad
\mbox{ for }(t,x)\in(0,\infty)\times\R^3,
\quad i,j=1,\dots,n.
\end{align*}
Consequently,
$$
\lim_{\eps\to0^+}T_i^\eps(t,x)= T(t,x)
$$
for all $i=1,\dots,n$.
\end{Lem}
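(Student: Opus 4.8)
The plan is to read the statement off the energy balance \eqref{eq:eps-energy-bal}--\eqref{eq:xi-eps}, which is the only place where the temperature differences appear with a singular prefactor. First I would observe that the left-hand side of \eqref{eq:eps-energy-bal} carries an overall factor $\eps$ (the next group of terms an overall $\eps^3$) in front of quantities assembled from $c^\eps_i$, $T^\eps_i$, $u^\eps_i$ and their first derivatives, which in this formal framework we treat as bounded; hence that side is $\mathcal{O}(\eps)$. Likewise the second sum in the right-hand side $\Xi^\eps_i$ of \eqref{eq:xi-eps} is a product of bounded quantities multiplied by $\eps$, hence $\mathcal{O}(\eps)$. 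Subtracting, the first sum in $\Xi^\eps_i$ must be $\mathcal{O}(\eps)$ as well; multiplying through by $\eps$ this reads
\begin{align*}
\sum_{j\neq i}\|b_{ij}\|_{L^1}\,\frac{m_j}{(m_i+m_j)^2}\,c^\eps_i\,c^\eps_j\,\bigl(T^\eps_i-T^\eps_j\bigr)=\mathcal{O}(\eps^2)\qquad\text{for each }i=1,\dots,n.
\end{align*}

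The second step is purely algebraic and exploits the sign pattern of this family of identities. I would fix $(t,x)$ and pick an index $i_*$ realizing $T^\eps_{i_*}=\max_{1\le i\le n}T^\eps_i$; then in the displayed identity with $i=i_*$ every summand is nonnegative, so each of them is individually $\mathcal{O}(\eps^2)$. Since the angular kernels satisfy $\|b_{ij}\|_{L^1}>0$, the masses are fixed, and the densities $c^\eps_i$ are (as is implicitly assumed in this formal limit) positive and locally bounded below, this forces $T^\eps_{i_*}-T^\eps_j=\mathcal{O}(\eps^2)$ for every $j$. Evaluating at an index $j_*$ realizing $\min_{1\le i\le n}T^\eps_i$ gives $0\le\max_i T^\eps_i-\min_i T^\eps_i=\mathcal{O}(\eps^2)$, whence $T^\eps_i-T^\eps_j=\mathcal{O}(\eps^2)$ for all $i,j$. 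Passing to the limit $\eps\to0^+$ then yields a single common value $\lim_{\eps\to0^+}T^\eps_i(t,x)=:T(t,x)$ for every $i$.

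The hard part, such as it is, lies in this second step: the naive route of multiplying the $i$-th identity by $m_i$ (which symmetrizes the coefficients to $w^\eps_{ij}=\|b_{ij}\|_{L^1}\frac{m_i m_j}{(m_i+m_j)^2}c^\eps_i c^\eps_j$) and then testing against $(T^\eps_1,\dots,T^\eps_n)$ produces only $\tfrac12\sum_{i,j}w^\eps_{ij}(T^\eps_i-T^\eps_j)^2=\mathcal{O}(\eps^2)$, i.e. $T^\eps_i-T^\eps_j=\mathcal{O}(\eps)$, which is half a power short of the claim. Recovering the full $\mathcal{O}(\eps^2)$ requires using the coercivity structure more carefully: either the elementary $\max$/$\min$ comparison above, or the equivalent observation that the weighted graph Laplacian with entries $w^\eps_{ij}$ is invertible on the orthogonal complement of the constant vector with a bound uniform in $\eps$ (its nonzero eigenvalues being controlled below by $\min_{i\neq j}w^\eps_{ij}$). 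In either guise one is relying on the standing formal assumptions that the densities remain positive and bounded and that the macroscopic fields and their first derivatives stay bounded along the limit; these are exactly the hypotheses that should be invoked explicitly when turning the $\mathcal{O}(\cdot)$ bookkeeping above into a rigorous argument.
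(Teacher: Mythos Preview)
Your argument is correct and the first step---reading off the $\mathcal{O}(\eps^2)$ bound on the weighted temperature sums from the leading order of the energy balance \eqref{eq:eps-energy-bal}--\eqref{eq:xi-eps}---is exactly what the paper does. Where the paper's proof diverges from yours is in the second step: it simply notes that ``in the limit, the above relations are linearly dependent'' and jumps to $T_i=T_j$, without isolating the $\mathcal{O}(\eps^2)$ rate on the individual differences $T^\eps_i-T^\eps_j$. Your max/min comparison is more explicit and actually delivers the full quantitative rate asserted in the Lemma; the graph-Laplacian alternative you sketch (invert the weighted Laplacian on the orthogonal complement of the constants) is the natural way to interpret what the paper presumably intends by its linear-dependence remark. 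In short, both proofs share the same skeleton, but yours fills in the quantitative extraction that the paper elides and spells out the implicit positivity and boundedness hypotheses on which the formal $\mathcal{O}(\cdot)$ bookkeeping rests.
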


\begin{proof}
By considering the leading order terms in the energy balance \eqref{eq:eps-energy-bal}-\eqref{eq:xi-eps} we have:
\begin{align*}
\sum_{j\not=i}
\|b_{ij}\|_{L^1}
\frac{m_j}{(m_i + m_j)^2}
c^\eps_i c^\eps_j \left(T^\eps_i-T^\eps_j\right)
= \mathcal{O}(\eps^2)
\quad 
\mbox{ for each }i=1,\dots,n.
\end{align*}
In the limit, the above relations are linearly dependent. Hence we deduce that
\begin{align*}
T_i (t,x) = T_j (t,x) \equiv T(t,x)
\qquad
\mbox{ for }(t,x)\in(0,\infty)\times\R^3
\end{align*}
for each $i,j=1,\dots,n$.

\end{proof}

Then, the following theorem holds.

\begin{Thm}\label{thm:formal-eps}
Let $(c_i, J_i, T)$ be the limit, as $\eps\to 0^+$ of the quantities $(c_i^\eps, J_i^\eps, T_i^\eps)$.
Then the macroscopic observables $(c_i, J_i, T)$ solve the system

\begin{equation}\label{eq:coupled-system-not-closed}
\left\{
\begin{array}{lll}
\displaystyle
\partial_t c_i + \nabla_x\cdot J_i = 0 &\qquad \mbox{\rm  on } (0,\infty)\times \R^3, &\quad i=1,\dots,n\\ \\
\displaystyle \nabla_x \left( c_i T \right) = -\sum_{j\neq i} \frac{c_j J_i-c_i J_j}{\dij} &\qquad
\mbox{\rm  on } (0,\infty)\times \R^3, &\quad i=1,\dots,n\\ \\
\displaystyle \partial_t \left( c_{\rm tot} T \right) + \frac{5}{3}\nabla_x \cdot \left( T \sum_{i=1}^n J_i \right) = 0  &\qquad
\mbox{\rm  on } (0,\infty)\times \R^3,\cr
\end{array}
\right.
\end{equation}
where $\displaystyle c_{\rm tot}(t,x) := \sum_{i=1}^n c_i(t,x)$ is the total concentration and the binary diffusion coefficients $\dij$ are given by
\begin{align*}
\dij = \frac{k}{2\pi \|b_{ij}\|_{L^1}} \frac{(m_i + m_j)}{m_i m_j}
\qquad i\neq j, \, \, i,j=1,\dots,n.
\end{align*}
Furthermore, the sum $\dsp\sum_{i=1}^n c_i(t,x) T(t,x)$ is space-independent.
\end{Thm}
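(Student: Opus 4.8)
The plan is to obtain the three lines of \eqref{eq:coupled-system-not-closed} by passing to the limit $\eps\to0^+$ in the three balance laws of Proposition \ref{prop:balance-laws}, and then to read the ``Furthermore'' clause off the second line. As this is a formal asymptotics, I assume throughout that the families $(c_i^\eps,u_i^\eps,T_i^\eps)$ together with the relevant derivatives stay bounded uniformly in $\eps$ and converge to $(c_i,u_i,T)$, so that every term explicitly marked $\mathcal{O}(\eps)$, $\mathcal{O}(\eps^2)$ or $\mathcal{O}(\eps^3)$ drops out in the limit. I will use repeatedly that $J_i^\eps=c_i^\eps u_i^\eps$ and that the preceding Lemma gives $T_i^\eps=T+\mathcal{O}(\eps^2)$, so that all species share the single limiting temperature $T$.

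The first two lines are direct. The mass balance \eqref{eq:eps-mass-bal} is already $\partial_t c_i^\eps+\nabla_x\cdot J_i^\eps=0$, and letting $\eps\to0^+$ yields $\partial_t c_i+\nabla_x\cdot J_i=0$. In the momentum balance \eqref{eq:eps-momen-bal} the inertial group carries a factor $\eps^2$ and the remainder of $\Theta_i^\eps$ in \eqref{eq:theta-eps} is $\mathcal{O}(\eps)$, so in the limit only $\frac{k}{m_i}\nabla_x(c_iT)=\sum_{j\ne i}2\pi\|b_{ij}\|_{L^1}\frac{m_j}{m_i+m_j}\big(c_ic_ju_j-c_jc_iu_i\big)$ survives. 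Writing $c_ic_ju_j=c_iJ_j$ and $c_jc_iu_i=c_jJ_i$, multiplying by $m_i/k$, and observing that the factor $\frac{m_i}{k}\,2\pi\|b_{ij}\|_{L^1}\frac{m_j}{m_i+m_j}$ equals precisely $1/\dij$ gives the flux--gradient relation on the second line.

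The third line is the crux. The right-hand side $\Xi_i^\eps$ in \eqref{eq:xi-eps} carries a singular prefactor $\eps^{-1}$; the Lemma makes the temperature difference $\mathcal{O}(\eps^2)$, so $\Xi_i^\eps=\mathcal{O}(\eps)$, but the individual limit of $\eps^{-2}(T_i^\eps-T_j^\eps)$ is not controlled and the per-species energy balance cannot be closed. The idea is to multiply the energy balance \eqref{eq:eps-energy-bal} by $\frac{m_i}{3k}$ and sum over $i$. On the left this collapses to $\eps\sum_i\big(\partial_t(c_i^\eps T_i^\eps)+\frac{5}{3}\nabla_x\cdot(c_i^\eps T_i^\eps u_i^\eps)\big)$ plus an $\eps^3$ group, while on the right the weight $m_i$ turns the kinetic coefficient into $\frac{m_im_j\|b_{ij}\|_{L^1}}{(m_i+m_j)^2}$, which is symmetric in $(i,j)$. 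Since both $c_i^\eps c_j^\eps(T_i^\eps-T_j^\eps)$ and $c_i^\eps c_j^\eps(m_ju_j^\eps+m_iu_i^\eps)\cdot(u_j^\eps-u_i^\eps)$ are antisymmetric under $i\leftrightarrow j$, the entire weighted sum $\sum_i\frac{m_i}{3k}\Xi_i^\eps$ vanishes identically in $\eps$, singular term included. Dividing the resulting identity by $\eps$, discarding the $\eps^2$ group and letting $\eps\to0^+$ (so that $c_i^\eps T_i^\eps u_i^\eps=T_i^\eps J_i^\eps\to TJ_i$) produces $\partial_t(c_{\rm tot}T)+\frac{5}{3}\nabla_x\cdot\big(T\sum_iJ_i\big)=0$. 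The ``Furthermore'' clause then follows by summing the second line over $i$: since $\dij=\dji$ the coefficient $1/\dij$ is symmetric while $c_jJ_i-c_iJ_j$ is antisymmetric in $(i,j)$, so $\nabla_x(c_{\rm tot}T)=-\sum_i\sum_{j\ne i}(c_jJ_i-c_iJ_j)/\dij=0$.

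I expect the only real obstacle to be the energy equation: recognising that the dangerous $\eps^{-1}$ contribution is not tamed species by species but only after the $m_i$-weighted summation, which symmetrises the otherwise asymmetric kinetic coefficient and thereby annihilates the antisymmetric temperature- and velocity-difference factors exactly rather than merely in the limit. The mass and momentum passages, the coefficient matching that identifies $\dij$, and the final symmetry argument for $c_{\rm tot}T$ are all routine bookkeeping.
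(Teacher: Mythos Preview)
Your proposal is correct and follows the same route as the paper: pass to the limit in the mass and momentum balances, then sum the energy balances to kill the right-hand side and obtain the temperature equation, and finally sum the flux--gradient relations to see that $c_{\rm tot}T$ is space-independent. Your explicit $m_i/(3k)$ weighting before summing the energy balance is in fact sharper than the paper's own proof, which merely says ``summing over $i$'' and ``by symmetry''; that weight is precisely what is needed both to collapse the left-hand side to $\partial_t(c_{\rm tot}T)+\tfrac{5}{3}\nabla_x\cdot(T\sum_iJ_i)$ and to turn the asymmetric factor $m_j/(m_i+m_j)^2$ into the symmetric $m_im_j/(m_i+m_j)^2$ so that the antisymmetric temperature and velocity differences cancel exactly.
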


\begin{proof}
The first equation of \eqref{eq:coupled-system-not-closed} can be straightforwardly obtained by performing the formal limit, as $\eps\to 0$, of Equation
\eqref{eq:eps-mass-bal}.
Deducing the flux-gradient relations is straightforward: equate the $\mathcal{O}(1)$ terms in the momentum balance equations \eqref{eq:eps-momen-bal}-\eqref{eq:theta-eps} and pass to the limit as $\eps\to 0$. Observe that the binary diffusion coefficients $\dij = \dji$, thanks to our earlier observation that $b_{ij} = b_{ji}$ for $i,j=1,\dots,n$ on the angular collision kernels. To show that the sum $\sum_{i=1}^n c_i T$ is space-independent, consider the flux-gradient relations on the second line of Equation \eqref{eq:coupled-system-not-closed}, sum over the index $1\le i\le n$ and pass to the limit. The result is
\begin{equation}
\label{eq:nabla-c-tot-T=0}
\nabla \left(\sum_{i=1}^n c_i T \right) = \nabla (  c_{\rm tot} T )= 0
\end{equation}
which in turn implies that
\begin{equation*}
\sum_{i=1}^n c_i(t,x) T(t,x) = g(t)
\qquad
\mbox{ for }(t,x)\in(0,\infty)\times\R^3,
\end{equation*}
for some function $g(t)$.
At the next order in the energy balance, by summing Equations \eqref{eq:eps-energy-bal}-\eqref{eq:xi-eps} over the index $1\le i\le n$, we have that the right-hand side is identically zero by symmetry and hence, at the leading order in $\eps$, we obtain the transport equation:
$$
\partial_t \left( c_{\rm tot} T \right) + \frac{5}{3}\nabla_x \cdot \left( T \sum_{i=1}^n J_i \right) = 0.
$$
\end{proof}

\begin{Rem}\label{rem:bisi-etal}
In our setting, all the kinetic temperatures of the species tend to the same limit $T$. In the literature, some authors considered limit procedures leading to multi-temperature and multi-velocity fluid-dynamic models, starting from the rescaled system
\begin{align*}
\partial_t f^\eps_i + v\cdot \nabla f^\eps_i = \frac{1}{\eps} Q_{ii}\left(f^\eps_i, f^\eps_i\right) + \sum_{j\neq i} Q_{ij}(f_i^{\eps},f_j^{\eps})
\end{align*}
for $i\in\{1,\dots,n\}$, which can be interpreted to be under the hyperbolic time-space scaling, i.e., $(t,x)\mapsto(\eps t, \eps x)$.
More details can be found in \cite{Bisi_2011, Bisi_2012, Pavic_2014, simic2015non}.
\end{Rem}

%%%%%%%%%%%%%%%%%%%%%%
\subsection{Closure relation}\label{ssec:closure}
%%%%%%%%%%%%%%%%%%%%%%

Note that the flux-gradient relations, i.e., the second line of equations in \eqref{eq:coupled-system-not-closed} are linearly dependent. So, we have $2n$ independent equations for $2n+1$ unknowns $(c_i, J_i, T)$. This necessitates an additional equation -- a closure relation. 
Of course, the correct closure depends on the physical setting and on the observed physical phenomena. We propose here, as an example, a possible closure relation, which basically imposes that, as a whole, the total concentration follows a Fickian behaviour, and analyse its consequences.

We hence suppose that the sum of the molar fluxes is locally proportional to the gradient of the total molar concentration, i.e.,
\begin{align}\label{eq:closure-relation-I}
\sum_{i=1}^n J_i = - \alpha \nabla c_{\rm tot} =  \alpha c_{\rm tot} \frac{\nabla T}{T}
\end{align}
for some proportionality constant $\alpha>0$, where the last equality comes from Equation \eqref{eq:nabla-c-tot-T=0}.
The negative sign guarantees that the mass flows locally in the opposite direction of the total concentration gradient. We name this closure the \textsl{decoupling closure relation}, as this leads to decoupled equations for the total concentration $c_{\rm tot}$ and the temperature field $T$.

\begin{Rem}\label{rem:closure-classical}
In \eqref{eq:nabla-c-tot-T=0}, if we take the temperature field to be constant (i.e., isothermal case), then $\nabla c_{\rm tot} = 0$. Hence the closure relation \eqref{eq:closure-relation-I} would yield in this scenario:
\[
\sum_{i=1}^n J_i = 0,
\]
i.e., the sum of the molar fluxes is locally identically zero. This is indeed the classical closure equation for the Maxwell-Stefan diffusion model in the isothermal setting.
\end{Rem}

%%%%%%%%%%%%%%%%%%%%%%
\subsection{Qualitative properties of the coupled system for $c_{\rm tot}$ and $T$}\label{ssec:quality}
%%%%%%%%%%%%%%%%%%%%%%
Associated with the closure relation \eqref{eq:closure-relation-I}, we shall derive a system of equations for the total molar concentration and for the temperature field.
\begin{Lem}\label{lem:system-closure-I}
Under the closure relation \eqref{eq:closure-relation-I}, the unknowns $c_{\rm tot}$ and $T$ satisfy the equations:
\begin{align}
& \partial_t c_{\rm tot} - \alpha \Delta c_{\rm tot} = 0,\label{eq:c-tot-closure-I}
\\[0.2 cm]
& \partial_t T - \left( \frac{2}{3} \partial_t \log c_{\rm tot} \right) T - \left( \frac{5\alpha}{3} \nabla \log c_{\rm tot} \right)\cdot \nabla T = 0.\label{eq:T-closure-I}
\end{align}
\end{Lem}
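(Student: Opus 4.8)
The plan is to eliminate the fluxes $J_i$ altogether by feeding the closure relation \eqref{eq:closure-relation-I} into the two non-redundant scalar equations of \eqref{eq:coupled-system-not-closed} (the first and third lines), so that only $c_{\rm tot}$ and $T$ remain. First I would sum the mass balance equations, the first line of \eqref{eq:coupled-system-not-closed}, over $i=1,\dots,n$, obtaining $\partial_t c_{\rm tot} + \nabla_x\cdot\big(\sum_{i=1}^n J_i\big)=0$; substituting $\sum_{i=1}^n J_i = -\alpha\nabla c_{\rm tot}$ from \eqref{eq:closure-relation-I} immediately gives the heat equation \eqref{eq:c-tot-closure-I}.

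For \eqref{eq:T-closure-I} I would start from the energy-type transport equation, the third line of \eqref{eq:coupled-system-not-closed}, and again insert the closure relation to rewrite it as $\partial_t(c_{\rm tot}T) - \frac{5\alpha}{3}\nabla_x\cdot(T\nabla c_{\rm tot})=0$. Expanding the time derivative by the Leibniz rule and the divergence as $\nabla T\cdot\nabla c_{\rm tot} + T\Delta c_{\rm tot}$, I would then use \eqref{eq:c-tot-closure-I} to replace $\alpha\Delta c_{\rm tot}$ by $\partial_t c_{\rm tot}$; the terms proportional to $T\Delta c_{\rm tot}$ combine with total coefficient $\alpha - \frac{5\alpha}{3} = -\frac{2\alpha}{3}$, i.e. they produce $-\frac{2}{3}T\,\partial_t c_{\rm tot}$. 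Dividing through by $c_{\rm tot}$ (taken strictly positive) and rewriting $\partial_t c_{\rm tot}/c_{\rm tot} = \partial_t\log c_{\rm tot}$ and $\nabla c_{\rm tot}/c_{\rm tot} = \nabla\log c_{\rm tot}$ yields \eqref{eq:T-closure-I} exactly.

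Since every step is an algebraic rearrangement of relations already established at the formal level -- the balance laws of Theorem \ref{thm:formal-eps}, the identity \eqref{eq:nabla-c-tot-T=0}, and the postulated closure \eqref{eq:closure-relation-I} -- there is no real obstacle to overcome. The only points deserving a word of care are the regularity required to justify the product and chain rules and the division by $c_{\rm tot}$, which we handle formally, consistently with the rest of the paper. The computation also makes transparent that the factor $5/3$ in the energy equation is precisely what generates the coefficients $2/3$ and $5\alpha/3$ appearing in \eqref{eq:T-closure-I}.
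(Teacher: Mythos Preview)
Your proposal is correct and follows essentially the same route as the paper's proof: sum the continuity equations and insert the closure to obtain the heat equation for $c_{\rm tot}$, then insert the closure into the energy equation, expand, and use \eqref{eq:c-tot-closure-I} to trade $\alpha\Delta c_{\rm tot}$ for $\partial_t c_{\rm tot}$ before dividing by $c_{\rm tot}$. The only cosmetic difference is that the paper substitutes $\Delta c_{\rm tot}=\alpha^{-1}\partial_t c_{\rm tot}$ directly in the single term $-\tfrac{5\alpha}{3}T\Delta c_{\rm tot}$ (yielding the coefficient $1-\tfrac{5}{3}=-\tfrac{2}{3}$ on $T\,\partial_t c_{\rm tot}$), whereas you phrase it as first passing $T\,\partial_t c_{\rm tot}$ to $\alpha T\Delta c_{\rm tot}$, combining, and passing back; the algebra and the outcome are identical.
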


\begin{proof}
Sum the mass balance equations in \eqref{eq:coupled-system-not-closed} over the index $i=1,\dots,n$ yielding
\begin{align*}
\partial_t c_{\rm tot} + \nabla \cdot \left( \sum_{i=1}^n J_i \right) = 0.
\end{align*}
Substituting the closure relation \eqref{eq:closure-relation-I} in the above equation yields the parabolic equation \eqref{eq:c-tot-closure-I}.\\
Next we substitute the closure relation \eqref{eq:closure-relation-I} in the energy balance equation in \eqref{eq:coupled-system-not-closed} which yields
\begin{align*}
c_{\rm tot} \partial_t T + T \partial_t c_{\rm tot} - \frac{5\alpha}{3} \nabla c_{\rm tot} \cdot \nabla T - \frac{5\alpha}{3} T \Delta c_{\rm tot} = 0.
\end{align*}
Substituting for $\Delta c_{\rm tot}$ using \eqref{eq:c-tot-closure-I} in the above equation yields the advection equation \eqref{eq:T-closure-I} for the temperature field $T(t,x)$.
\end{proof}

Note that the system of equations \eqref{eq:c-tot-closure-I}-\eqref{eq:T-closure-I} are decoupled. One can solve the heat equation \eqref{eq:c-tot-closure-I} for the total concentration $c_{\rm tot}$ and treat it as a known coefficient in the advection problem \eqref{eq:T-closure-I} for $T(t,x)$. 

For system \eqref{eq:c-tot-closure-I}-\eqref{eq:T-closure-I}, we record a maximum principle.

\begin{Prop}\label{prop:max-princ-cT}
Suppose the initial data $c_{\rm tot}^{\rm in}$ and $T^{\rm in}$ to the evolution equations \eqref{eq:c-tot-closure-I}-\eqref{eq:T-closure-I} are non-negative and satisfy
\begin{align*}
0 < c_{\rm min} \le c_{\rm tot}^{\rm in}(x) \le c_{\rm max} <\infty; \quad
0 < T_{\rm min} \le T^{\rm in}(x) \le T_{\rm max} <\infty.
\end{align*}
Then
\begin{align*}
c_{\rm min} \le c_{\rm tot}(t,x) \le c_{\rm max} \quad \mbox{ for }(t,x)\in [0,\infty)\times\R^3.
\end{align*}
Furthermore
\begin{align}\label{eq:temp-solution}
T(t,x) = T^{\rm in}({\scriptstyle X}(0;t,x)) e^{\frac23\int\limits_0^t \partial_t \left(\log c_{\rm tot}\right) (s,{\scriptscriptstyle X}(s;t,x))\, {\rm d}s} \quad \mbox{ for }(t,x)\in [0,\infty)\times\R^3,
\end{align}
where ${\scriptstyle X}(s;t,x)$ is the flow associated with the vector field $\mathcal{V}(t,x):= -\frac{5\alpha}{3}\nabla \log c_{\rm tot}$.
\end{Prop}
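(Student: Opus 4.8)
The plan is to exploit the fact, established in Lemma~\ref{lem:system-closure-I}, that the system \eqref{eq:c-tot-closure-I}--\eqref{eq:T-closure-I} is decoupled, and to treat its two equations by entirely different means: a maximum-principle argument for the parabolic equation governing $c_{\rm tot}$, and the method of characteristics for the transport equation governing $T$. For the first one I would represent the solution of the heat equation \eqref{eq:c-tot-closure-I} (well-posed since $\alpha>0$) through its Gaussian fundamental solution, $c_{\rm tot}(t,\cdot) = G_{\alpha t}\ast c_{\rm tot}^{\rm in}$ with $G_{\alpha t}(y) = (4\pi\alpha t)^{-3/2}\,e^{-|y|^2/(4\alpha t)}$. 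Since $G_{\alpha t}\ge 0$ and $\int_{\R^3}G_{\alpha t}(y)\,\mathrm{d}y = 1$, convolution against it is order preserving and reproduces constant functions; hence the pointwise bounds $c_{\min}\le c_{\rm tot}^{\rm in}\le c_{\max}$ are inherited by $c_{\rm tot}(t,x)$ for every $t\ge 0$. (Alternatively one invokes the weak maximum principle for the heat operator on $\R^3$ under a mild growth restriction at infinity.) In particular $c_{\rm tot}$ stays bounded away from $0$ and from $+\infty$, a fact used crucially in the second step.

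For the temperature field I would recast \eqref{eq:T-closure-I} as the linear transport equation $\partial_t T + \mathcal{V}\cdot\nabla T = \tfrac23\big(\partial_t\log c_{\rm tot}\big)T$, with transport field $\mathcal{V}(t,x) := -\tfrac{5\alpha}{3}\nabla\log c_{\rm tot}(t,x)$ and zeroth-order coefficient $\tfrac23\partial_t\log c_{\rm tot}$. Introducing the characteristic flow $s\mapsto{\scriptstyle X}(s;t,x)$ defined by $\partial_s{\scriptstyle X}(s;t,x) = \mathcal{V}\big(s,{\scriptstyle X}(s;t,x)\big)$, ${\scriptstyle X}(t;t,x)=x$, the function $s\mapsto T\big(s,{\scriptstyle X}(s;t,x)\big)$ solves the scalar linear ODE
\[
\frac{\mathrm{d}}{\mathrm{d}s}\,T\big(s,{\scriptstyle X}(s;t,x)\big) = \tfrac23\big(\partial_t\log c_{\rm tot}\big)\big(s,{\scriptstyle X}(s;t,x)\big)\,T\big(s,{\scriptstyle X}(s;t,x)\big).
\]
Integrating this ODE between $s=0$ and $s=t$ and using $T(0,\cdot)=T^{\rm in}$ yields precisely the representation formula \eqref{eq:temp-solution}.

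The delicate point is not the formal manipulation but making the method of characteristics rigorous: one must show that the flow ${\scriptstyle X}(s;t,x)$ is well defined, unique, and can be continued all the way down to $s=0$. This requires $\mathcal{V}$ to be locally Lipschitz in $x$ uniformly on compact time intervals, together with a bound preventing trajectories from escaping in finite time; both follow from parabolic regularity applied to \eqref{eq:c-tot-closure-I}. Indeed, since $c_{\rm tot}$ is smooth on $(0,\infty)\times\R^3$ and satisfies $0<c_{\min}\le c_{\rm tot}\le c_{\max}$, the function $\log c_{\rm tot}$ is smooth there, and interior estimates for the heat kernel give bounds for $\nabla\log c_{\rm tot}$ and its spatial derivatives on every slab $[\delta,\tau]\times\R^3$ with $0<\delta<\tau<\infty$; the behaviour as $s\downarrow 0$ is controlled either by assuming matching regularity of $c_{\rm tot}^{\rm in}$ or by a brief approximation argument. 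Once the flow is globally defined, uniqueness for the transported ODE closes the argument and establishes \eqref{eq:temp-solution}.
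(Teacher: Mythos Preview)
Your proposal is correct and follows essentially the same route as the paper's proof: invoke the maximum principle for the heat equation \eqref{eq:c-tot-closure-I} to get the bounds on $c_{\rm tot}$, then solve the linear transport equation \eqref{eq:T-closure-I} by the method of characteristics along the flow of $\mathcal{V}$. Your version is more explicit (heat-kernel convolution, the ODE along characteristics, and the regularity discussion for the flow) than the paper, which simply cites ``standard maximum principles'' and writes down the characteristic ODE, but the underlying argument is identical.
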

\begin{proof}
Standard maximum principles on the heat equation implies that the solution to \eqref{eq:c-tot-closure-I} stays non-negative and is bounded both from above and below, the same as the initial data.\\
To solve for the temperature field $T(t,x)$ in the evolution equation \eqref{eq:T-closure-I}, consider the solution ${\scriptstyle X}(s;t,x)$ to the differential equation
\begin{align*}
\frac{{\rm d}{\scriptstyle X}}{{\rm d}s}(s;t.x) & = \mathcal{V}(s, {\scriptstyle X}(s;t,x))
\\
{\scriptstyle X}(t;t.x) & = x.
\end{align*}
Equipped with the flow ${\scriptstyle X}(s;t.x)$, the solution $T(t,x)$ as given by \eqref{eq:temp-solution} follows.
\end{proof}

This article concerns a formal derivation of the diffusion limit for the gas mixtures in a non-isothermal setting. Proving an existence-uniqueness result for the proposed system is out of the scope of this present article. The authors are currently working on this problem and expect to have a publication in the near future.

\medskip
\medskip

\noindent {\bf Acknowledgments:} This work was partially funded by the projects \textit{Kimega} (ANR-14-ACHN-0030-01) and
\textit{Kibord} (ANR-13-BS01-0004). H.H. acknowledges the support of the ERC grant MATKIT and the EPSRC programme grant ``Mathematical fundamentals of Metamaterials for multiscale Physics and Mechanic'' (EP/L024926/1).

\end{document}